\newtheorem{theorem}{Theorem}
\newtheorem{theorem*}{Theorem 4}
\newtheorem{conjecture}[theorem]{Conjecture}
\newtheorem{corollary}[theorem]{Corollary}
\newtheorem{definition}[theorem]{Definition}
\newtheorem{proposition}[theorem]{Proposition}
\newtheorem*{mainthm}{Theorem \ref{betti}}
\newtheorem*{mainthmtwo}{Theorem \ref{tsubgroup}}
\newtheorem*{amthm}{Theorem \ref{almostmalnormal}}
\def\field{\mathbb{F}_2}
\def\coind{\textrm{Coind}}
\def\ind{\textrm{Ind}}
\def\hom{\textrm{Hom}}
\title{Relative Ends, $\ell^2$-Invariants and Property (T)}
\author{Aditi Kar and Graham A. Niblo}
\address{School of Mathematics, University of Southampton, Highfield, Southampton, SO17 1SH, England}
\email{A.Kar@soton.ac.uk, G.A.Niblo@soton.ac.uk}
\thanks{This research was partially supported by EPSRC grant  EP/F031947/1.}
\begin{document}
\begin{abstract}
We establish a splitting theorem for one-ended groups $H \leq G$ such that $\tilde{e}(G,H) \geq 2$ and the almost malnormal closure of $H$ is a proper subgroup of $G$. This yields splitting theorems for groups $G$ with non-trivial first $\ell^2$-Betti number $\beta^2_1(G)$. We verify the Kropholler Conjecture for pairs $H\leq G$ satisfying $\beta^2_1(G) > \beta^2_1(H)$. We also prove that every $n$-dimensional Poincar\'e duality ($PD^n$) group containing a $PD^{n-1}$ group $H$ with property (T) splits over a subgroup commensurable with $H$.
\end{abstract}

\maketitle

In this article we explore the relationship between the theory of relative ends, groups with non-trivial first $\ell^2$-cohomology and the presence of subgroups with property (T). The desired conclusion is to obtain splittings of groups, i.e., nontrivial decompositions of groups into amalgams or HNN extensions. We use two different notions of `relative ends' for groups $H \leq G$, the geometric one which is usually written $e(G,H)$ and its algebraic counterpart $\tilde{e}(G,H)$.

The classical theory of the ends of a group originated in the work of Freudenthal and Hopf (See \cite{ends}, \cite{hopf}). From the point of view of a geometric group theorist the number of ends of a finitely generated group $G$, written $e(G)$,  is the number of Freudenthal-Hopf ends of a connected locally finite Cayley graph for $G$, regarded as a 1-dimensional simplicial complex. While \emph{a priori} the number could depend on the generating set chosen, it is in fact independent provided the chosen generating set is finite, i.e., it is a quasi-isometry invariant of the group. There is an alternative definition of $e(G)$ which is more obviously independent of choice of generating sets, and which extends to a definition of the number of ends for an arbitrary discrete group. 

\begin{definition}
Let $G$ be a discrete group, $\mathcal P(G)$ denote the power set of $G$, and $\mathcal F(G)$ denote the set of finite subsets of $G$. Then $\mathcal{F}(G), \mathcal {P}(G)$ and the quotient $ \mathcal{F}(G) \backslash \mathcal{P}(G)$ are all $\field G$-modules, where $\field$ denotes the field of $2$ elements. We denote by $e(G)$ the dimension of the $G$ invariant subspace $ \left(\mathcal{F}(G)) \backslash \mathcal{P}(G) \right)^G$.
\end{definition}

Hopf showed in \cite{hopf} that the number of ends of a finitely generated group must be 0, 1, 2 or $\infty$. Moreover, groups with 0 and 2 ends are easily classified: $e(G)=0$ if and only if $G$ is finite and $e(G)=2$ if and only if $G$ is virtually $\mathbb{Z}$. Stallings' celebrated theorem from \cite{infends} classifies finitely generated groups for which $e(G)\geq 2$. We state it here in its most general form as proved by Dicks and Dunwoody  using the Almost Stability Theorem. 
 
\begin{theorem} \label{stallings}(Theorem IV.6.10 of \cite{dicks}) Let $G$ be a group. The following are equivalent:
\begin{enumerate} 
\item $e(G) > 1$ 
\item $H^1(G,M) \neq 0$, for any free $G$ module $M$, 
\item There exists a $G$-tree with finite edge stabilizers such that no vertex is stabilized by $G$. 
\item One of the following holds:
\begin{itemize}
\item $G=B*_CD$ where $B \neq C \neq D$ and $C$ is finite, 
\item $G=B*_C$, where $C$ is finite, 
\item $G$ is countably infinite and locally finite. 
\end{itemize} 
\item the group $G$ has 2 or infinitely many ends. 
\end{enumerate}
\end{theorem}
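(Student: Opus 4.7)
The plan is to prove the cyclic implications $(4) \Rightarrow (1) \Leftrightarrow (2) \Rightarrow (3) \Rightarrow (4)$, with $(1) \Leftrightarrow (5)$ following from Hopf's classical dichotomy that $e(G) \in \{0,1,2,\infty\}$.

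For $(4) \Rightarrow (1)$, in each sub-case I would exhibit a nontrivial $G$-invariant class in $\mathcal F(G)\backslash\mathcal P(G)$. For an amalgam $B *_C D$ or HNN extension over a finite subgroup $C$, the set of group elements whose normal form starts on a specified side represents such a class, because left multiplication by a generator alters it only inside a coset of $C$, which is finite. For a countably infinite locally finite group, a strictly increasing exhaustion by finite subgroups furnishes the desired subset. The equivalence $(1) \Leftrightarrow (2)$ then follows from the long exact sequence in group cohomology derived from $0 \to \mathcal F(G) \to \mathcal P(G) \to \mathcal F(G)\backslash\mathcal P(G) \to 0$, which identifies $e(G) > 1$ with $H^1(G, \field G) \neq 0$; the extension to arbitrary free $G$-modules is a standard direct-sum argument.

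The main obstacle is $(2) \Rightarrow (3)$, which is essentially the Almost Stability Theorem of Dicks and Dunwoody. Starting from a nontrivial almost-invariant subset $A \subseteq G$ representing a class in $\mathcal F(G)\backslash\mathcal P(G)$, one must extract a $G$-invariant nested family of subsets, i.e., translates $gA$ satisfying, modulo finite symmetric difference, one of $gA \subseteq A$, $gA \supseteq A$, $gA \cap A = \emptyset$, or $gA \cup A = G$. Such a family can be organized as the oriented edge set of a $G$-tree, with vertices obtained by a Stone-type duality from the subsets; the edge stabilizers are then automatically finite because they must preserve an almost-invariant subset up to finite perturbation. The technical heart of the argument is the nesting construction, which requires a delicate combinatorial argument showing that overlapping almost-invariant subsets can always be refined to nested ones.

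Finally, $(3) \Rightarrow (4)$ is Bass-Serre theory applied to the quotient graph of groups $G \backslash T$. When $T / G$ contains a single edge, the global structure is an amalgamated product or HNN extension over the finite edge stabilizer, depending on whether the edge separates. The countably infinite locally finite case arises precisely when $T$ admits no minimal $G$-invariant subtree, so that vertex stabilizers remain finite while $G$ is exhausted by an ascending union of these finite subgroups along an unbounded ray in $T$.
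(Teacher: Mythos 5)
The paper does not prove this theorem; it is quoted directly from Theorem IV.6.10 of Dicks--Dunwoody \cite{dicks}, so there is no in-paper argument to compare against. Your plan tracks the standard route and correctly isolates $(2)\Rightarrow(3)$, i.e.\ the Almost Stability Theorem, as the deep step; the cyclic scheme $(4)\Rightarrow(1)\Leftrightarrow(2)\Rightarrow(3)\Rightarrow(4)$, Hopf's dichotomy for $(1)\Leftrightarrow(5)$, and the derivation of $(1)\Leftrightarrow(2)$ from the short exact sequence $0\to\mathcal F(G)\to\mathcal P(G)\to \mathcal F(G)\backslash\mathcal P(G)\to 0$ (with $\mathcal P(G)$ coinduced, hence acyclic, and $\mathcal F(G)\cong\field G$) are all sound.

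Two steps are stated too loosely to stand as written. In $(4)\Rightarrow(1)$ for a countably infinite locally finite $G$, a strictly increasing exhaustion $G_0<G_1<\cdots$ by finite subgroups does not by itself ``furnish the desired subset'': each $G_n$ is finite and therefore represents the zero class in $\mathcal F(G)\backslash\mathcal P(G)$. One must manufacture an infinite, co-infinite almost invariant set from the chain, e.g.\ choose right transversals $T_n$ of $G_n$ in $G_{n+1}$, write each nontrivial $g$ uniquely as $g=t_0t_1\cdots t_m$ with $t_i\in T_i$ and $t_m\ne 1$, and let $A$ be the set of such $g$ with $m$ even; then for $s\in G_k$ one checks $A+sA\subseteq G_k\cup sG_k$, which is finite, while $A$ and its complement are both infinite. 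In $(3)\Rightarrow(4)$ you jump between the extremes ``$T/G$ a single edge'' and ``no minimal invariant subtree,'' skipping the case of a minimal subtree whose quotient graph has more than one edge orbit, where one must collapse or partition across a chosen edge orbit and verify nontriviality of the resulting amalgam or HNN decomposition. Moreover, in the no-minimal-subtree case the assertion that ``vertex stabilizers remain finite'' needs justification: one shows every element is elliptic and $G$ fixes an end $\omega$, picks a ray $(v_n)$ to $\omega$, and observes that any $h$ fixing $v_n$ also fixes $\omega$ and hence the entire ray from $v_n$ to $\omega$, so $G_{v_n}$ lies inside the (finite) stabilizer of the edge from $v_n$ to $v_{n+1}$. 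Without these two repairs the cycle of implications does not close.
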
 
 
The quest for a generalisation of this result covering splittings over arbitrary subgroups has played a central role in low dimensional topology and geometric group theory. The classical and algebraic annulus and torus theorems are key examples (See \cite{Scott2} and references therein). While working on this problem, Scott introduced in \cite{scottends} an invariant $e(G,H)$ for a subgroup $H$ of a group $G$, which, in the case when $G$ is finitely generated, can be identified with the number of Freudenthal-Hopf ends of the quotient of a locally finite Cayley graph for $G$ by the action of $H$. As with the classical end invariant, $e(G,H)$ does not depend on the choice of Cayley graph, and indeed the definition may be extended to the class of all discrete groups. We postpone the definition to section \ref{relativeends}.
  
Scott showed in \cite{scottends} that if $G$ splits as a non-trivial amalgamated free product $G=A*_CB$ or as an HNN extension $G=A*_C$ then $e(G,H)\geq 2$. Noting that $e(G,\{1\})=e(G)=e(G,C)$ for any finite subgroup $C<G$, Scott reformulated Stallings' theorem as the statement that $G$ splits over a finite subgroup if and only if $e(G,C)\geq 2$ for some finite subgroup $C<G$. He asked for which subgroups $H<G$ the analogous statement is true, remarking that it is certainly not true in general. For example the triangle group $G=\langle a,b,c\mid a^2=b^2=c^2=(ab)^2=(bc)^3=(ca)^5\rangle$ has an infinite cyclic subgroup $H=\langle ab^{-1}\rangle$ with $e(G,H)=2$, but the group $G$ does not split as an HNN extension, nor as a non-trivial amalgamated free product, over \emph{any} subgroup. Scott's resolution to this was the observation that while $G$ does not split, it has a finite index subgroup $G'$ which splits as an HNN extension over $H$. 

A more complete answer was given by the algebraic annulus theorem which asserts that if $G$ is a one ended finitely generated group containing a two-ended subgroup $H$ with $e(G,H)\geq 2$ then $G$ is virtually $\mathbb{Z}^2$ or $G$ contains a two ended subgroup $K$ over which it splits, or $G$ has a finite normal subgroup $N$ whose factor group is a surface group. Here, we see two ways in which the obstruction to splitting over a subgroup can be overcome: one is to replace the group $G$ by a finite index subgroup, the other is to adjust the subgroup $H$. Both strategies play an important role in low dimensional topology. The latter is crucial in the statement and proof of the classical torus theorem (the fore-runner of the algebraic annulus and torus theorems) while the former is related to the virtual Haken and virtually positive first Betti number conjectures.
 
Scott's proof that the triangle group contains a finite index subgroup which splits over the infinite cyclic subgroup $H$ relied on the observation that the subgroup $H$ is an intersection of finite index subgroups. Scott generalised this in \cite{scottsurface} to show that if $G$ is a finitely generated group, and $H<G$ is a finitely generated subgroup which is an intersection of finite index subgroups and such that $e(G,H)\geq 2$ then $G$ has a finite index subgroup which splits over $H$. In particular, if $G$ is a LERF group (i.e., a group in which every finitely generated subgroup is an intersection of finite index subgroups of $G$), then every finitely generated subgroup $H$ with $e(G,H)\geq 2$ is the edge group of a splitting for some finite index subgroup of $G$. Essentially the idea is that the obstruction to splitting $G$ over $H$ (sometimes referred to as the singularity obstruction) is carried by finitely many double cosets of $H$ in $G$ and that by passing to a suitable finite index subgroup one removes all these elements.

In \cite{N} the singularity obstruction $\mathcal S=\text{Sing}(G,H)$ was studied in more depth and it was shown that if $\mathcal S\cup H$ is contained in a proper subgroup $G'$ of $G$ then $G$ will split over a subgroup  of the group $\langle \mathcal S\cup H\rangle$, while if  $\mathcal S$ is contained in the commensurator of $H$ in $G$ then $G$ will split over a subgroup commensurable with $H$. Scott's technique of passing to finite index subgroups was also strengthened to show that if the singularity obstruction is supported on $n$ double cosets of $H$ in $G$ and $H$ is contained in a strictly decreasing chain of finite index subgroups of $G$ of length at least $n$ then $G$ has a finite index subgroup which splits.

While this last result has the advantage that it no longer requires $H$ to be an intersection of finite index subgroups, the length of the chain required to ensure that $G$ virtually splits depends crucially on the size of the splitting obstruction and therefore, on the embedding of $H$ in $G$. In an effort to circumvent this difficulty we offer the following result (Corollary to Theorem \ref{betti}) which replaces the size of the singularity obstruction in the statement by a number which depends on the $\ell^2$ Betti numbers of $H$ and $G$ instead. This has the advantage that it is intrinsic to the groups $H$ and $G$ and does not depend on the embedding of $H$ in $G$, but comes with the disadvantage of applying only when $G$ has positive $\ell^2$ Betti number, $\beta_1^{(2)}(G)$. See \cite{malnormal} for examples. 

\begin{corollary}\label{positiveGbetti} Let $H \leq G$ be discrete and countable one-ended groups such that $\beta_1^{(2)}(G) >0$.  If $\tilde{e}(G,H) \geq 2$ and $H$ is contained in a finite index subgroup $G'<G$ with $[G:G'] > \beta_1^{(2)}(H)/\beta_1^{(2)}(G)$,  then $G'$ splits over a subgroup of the almost malnormal closure of  $H$. (See Definition \ref{am}.)\end{corollary}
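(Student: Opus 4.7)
The plan is to apply Theorem~\ref{betti} to the pair $H \leq G'$, so I begin by verifying its hypotheses with $G'$ in place of $G$. The group $G'$ is one-ended, being a finite-index subgroup of the one-ended, necessarily infinite group $G$ (infinite because $\beta_1^{(2)}(G)>0$); finite-index subgroups of one-ended groups are one-ended by Stallings' theorem~\ref{stallings}. The reduced relative end invariant is stable under passing to a finite-index intermediate subgroup, so $\tilde{e}(G',H) = \tilde{e}(G,H) \geq 2$: this reflects the fact that the $H$-almost invariance of a subset $A\subseteq G$ only depends on its behaviour modulo $H$, and restricting to cosets lying in $G'$ preserves both the finiteness and the nontriviality conditions that give $\tilde{e}(G,H)\geq 2$.

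The substantive point is that the almost malnormal closure $\bar H$ of $H$ inside $G'$ must be a \emph{proper} subgroup of $G'$, after which Theorem~\ref{betti} delivers a splitting of $G'$ over a subgroup of $\bar H$. For properness, I invoke multiplicativity of the first $\ell^2$-Betti number over finite-index subgroups, together with the index hypothesis, to obtain
\[
\beta_1^{(2)}(G') \;=\; [G:G'] \cdot \beta_1^{(2)}(G) \;>\; \beta_1^{(2)}(H).
\]
If $\bar H$ were all of $G'$, then the iterative construction of the almost malnormal closure --- an ascending union of subgroups $H=H_0 \leq H_1 \leq \cdots$ obtained at each stage by adjoining an element $g$ for which $H_i \cap gH_ig^{-1}$ is infinite --- combined with the standard Mayer--Vietoris estimates for $\ell^2$-Betti numbers of amalgams and HNN extensions over \emph{infinite} edge groups would yield $\beta_1^{(2)}(H_{i+1}) \leq \beta_1^{(2)}(H_i)$ at every step, and hence $\beta_1^{(2)}(G') = \beta_1^{(2)}(\bar H) \leq \beta_1^{(2)}(H)$, contradicting the display.

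With all hypotheses of Theorem~\ref{betti} secured for the pair $H \leq G'$, its conclusion completes the proof. The main obstacle in this reduction is the $\ell^2$-Betti-number monotonicity at each stage of the almost malnormal closure; I would expect this to be an auxiliary lemma already isolated during the proof of Theorem~\ref{betti}, and used here as a black box. A secondary subtlety to double-check is that the almost malnormal closure computed inside $G'$ agrees with (or is at worst contained in) the almost malnormal closure computed inside $G$, so that the edge group indeed lies in the almost malnormal closure of $H$ as stated.
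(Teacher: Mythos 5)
Your core reduction is exactly what the paper intends: apply Theorem~\ref{betti} to the pair $H\le G'$, checking that $G'$ is one-ended (finite index in a one-ended infinite group), that $\tilde{e}(G',H)\ge 2$, and that $\beta_1^{(2)}(G')=[G:G']\,\beta_1^{(2)}(G)>\beta_1^{(2)}(H)$ by multiplicativity of the first $\ell^2$-Betti number together with the index hypothesis. The paper records the corollary as an immediate consequence of Theorem~\ref{betti}, and this is the verification it expects.

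Where you go astray is in treating properness of the almost malnormal closure of $H$ in $G'$ as an additional hypothesis of Theorem~\ref{betti} that you must secure yourself. It is not: Theorem~\ref{betti} requires only one-endedness, $\tilde{e}(G,H)\ge 2$, and $\beta_1^{(2)}(G)>\beta_1^{(2)}(H)$, and it is Theorem~\ref{almostmalnormal} that takes properness of the closure as input. Theorem~\ref{betti} is reduced to Theorem~\ref{almostmalnormal} inside the paper by citing Peterson--Thom, who show that $\beta_1^{(2)}(G)>\beta_1^{(2)}(H)$ forces $H$ to lie in a proper almost malnormal subgroup; you do not need to reprove this. Worse, the mechanism you propose for reproving it does not match the paper's definition: the almost malnormal closure is an \emph{intersection} of almost malnormal overgroups, not an ascending union built by adjoining elements $g$ with $H_i\cap gH_ig^{-1}$ infinite, and the Mayer--Vietoris estimate you invoke for each adjunction step is neither in the paper nor an obvious consequence of what is. There is no such ``auxiliary lemma isolated during the proof of Theorem~\ref{betti}''; the black box is Peterson--Thom, and it is already consumed inside Theorem~\ref{betti}. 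Dropping the entire properness paragraph leaves you with a correct proof that coincides with the paper's.

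One small point you raise that is worth keeping: the almost malnormal closure in the conclusion should be read inside $G'$, since that is the ambient group to which Theorem~\ref{betti} is being applied; intersecting an almost malnormal subgroup of $G$ with $G'$ yields an almost malnormal subgroup of $G'$, so the closure in $G'$ is contained in the closure in $G$, and the statement is consistent either way.
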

 
The end invariant $\tilde{e}(G,H)$ mentioned above is a generalisation of Scott's end invariant and was introduced by Kropholler and Roller, \cite{KR}, in their study of the algebraic torus theorem for Poincar\'e duality groups. We will state the definition of $\tilde{e}(G,H)$ in section \ref{relativeends}, but note here that in particular if $e(G,H)\geq 2$ then $\tilde{e}(G,H)\geq 2$ as required.

For an introduction to the theory of $\ell^2$ cohomology, we refer the reader to \cite{luck2}. Corollary \ref{positiveGbetti} follows directly from Theorem \ref{betti} below. Note that groups with non-trivial first $\ell^2$ betti number are either one-ended or have infinitely many ends. In the latter case, Theorem \ref{stallings} says that the group splits over a finite subgroup or is locally finite. 

\begin{theorem} \label{betti} Let $H \leq G$ be discrete and countable one-ended groups such that $\beta_1^{(2)}(G)$ $> \beta_1^{(2)}(H)$. If $\tilde{e}(G,H) \geq 2$ then $G$ splits over a subgroup of the almost malnormal closure of  $H$. \end{theorem}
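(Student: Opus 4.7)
The plan is to reduce Theorem \ref{betti} to the main splitting theorem (Theorem \ref{almostmalnormal}), which establishes the same conclusion — a splitting of $G$ over a subgroup of the almost malnormal closure $\overline{H}$ of $H$ — under the hypotheses that $H$ and $G$ are one-ended, $\tilde{e}(G,H) \geq 2$, and $\overline{H}$ is a proper subgroup of $G$. The first two hypotheses are already in force, so the task reduces to showing that
\[
\beta_1^{(2)}(G) > \beta_1^{(2)}(H) \quad \Longrightarrow \quad \overline{H} \subsetneq G.
\]

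I would prove the contrapositive: assume $\overline{H} = G$ and derive $\beta_1^{(2)}(G) \leq \beta_1^{(2)}(H)$. The closure $\overline{H}$ is built from $H$ by iteratively adjoining elements $g \in G$ for which $H \cap g H g^{-1}$ is infinite; this presents $G$ as a (possibly transfinite) ascending union $H = G_0 \leq G_1 \leq \cdots$ with each $G_{n+1} = \langle G_n, g_n \rangle$ and the intersection $G_n \cap g_n G_n g_n^{-1}$ infinite (since it contains $H \cap g_n H g_n^{-1}$). At each stage $G_{n+1}$ is a quotient of the HNN extension $\widetilde{G}_{n+1} = G_n \ast_{G_n \cap g_n G_n g_n^{-1}, \phi}$, with $\phi$ the conjugation isomorphism. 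Because the edge group of this HNN extension is infinite, its $\beta_0^{(2)}$ vanishes, and the Mayer--Vietoris long exact sequence in reduced $\ell^2$-cohomology for the Bass--Serre tree of $\widetilde{G}_{n+1}$ yields $\beta_1^{(2)}(\widetilde{G}_{n+1}) \leq \beta_1^{(2)}(G_n)$. Controlling the passage to the quotient $G_{n+1}$ then gives the same bound for $\beta_1^{(2)}(G_{n+1})$, and iterating, together with L\"uck's continuity of $\ell^2$-Betti numbers under directed unions, delivers $\beta_1^{(2)}(G) \leq \beta_1^{(2)}(H)$, the desired contradiction.

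The principal technical obstacle is the inductive step $\beta_1^{(2)}(G_{n+1}) \leq \beta_1^{(2)}(G_n)$. The Mayer--Vietoris bound $\beta_1^{(2)}(\widetilde{G}_{n+1}) \leq \beta_1^{(2)}(G_n)$ for the HNN extension is essentially formal given the infiniteness of the edge group, but since $G_{n+1}$ is only a quotient of $\widetilde{G}_{n+1}$ and $\ell^2$-Betti numbers can in general increase under quotients, one must argue further. One resolution is to analyse the kernel of $\widetilde{G}_{n+1} \twoheadrightarrow G_{n+1}$, which is normally generated by relations supported on conjugates of the edge subgroup, and use a Lyndon--Hochschild--Serre style spectral sequence to show that $\beta_1^{(2)}$ does not grow. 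An alternative is to sidestep the HNN presentation altogether by working directly with a graph-of-groups decomposition of $G_{n+1}$ or a von Neumann dimension bound, exploiting the commensurability of the two copies of $G_n$ inside it. Either way, the crucial input is the infiniteness of $G_n \cap g_n G_n g_n^{-1}$ at every stage, inherited from the infiniteness of $H \cap g_n H g_n^{-1}$; one should also verify that the bound is preserved at limit stages of the (possibly transfinite) induction, which is where L\"uck's continuity intervenes.
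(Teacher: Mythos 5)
Your high-level reduction is exactly the one the paper uses: show that the almost malnormal closure $K$ of $H$ is a proper subgroup of $G$ and then invoke Theorem \ref{almostmalnormal}, whose other hypotheses ($H$, $G$ one-ended, $\tilde{e}(G,H)\geq 2$) are already in force. However, the paper accomplishes the key step $K\subsetneq G$ by citing Peterson and Thom \cite{malnormal}, who prove (via a rank argument in the ring of affiliated operators) that $\beta_1^{(2)}(G)>\beta_1^{(2)}(H)$ forces $H$ to lie in a proper almost malnormal subgroup of $G$. You instead attempt to reprove this from scratch, and the attempt has a genuine gap.

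The problem is the inductive step $\beta_1^{(2)}(G_{n+1})\leq\beta_1^{(2)}(G_n)$, which you correctly identify as the crux but do not actually close. The Mayer--Vietoris bound applies to the HNN extension $\widetilde{G}_{n+1}=G_n\ast_{G_n\cap g_nG_ng_n^{-1}}$, but $G_{n+1}=\langle G_n,g_n\rangle$ is only a quotient of $\widetilde{G}_{n+1}$ (via $t\mapsto g_n$), and first $\ell^2$-Betti numbers are not monotone under quotients. Your assertion that the kernel of $\widetilde{G}_{n+1}\twoheadrightarrow G_{n+1}$ is ``normally generated by relations supported on conjugates of the edge subgroup'' is not justified and is not true in general: the kernel is simply the normal closure of words in $G_n$ and $t$ that collapse once $t$ is specialised to $g_n$, and nothing confines these to the edge group. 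The two proposed resolutions (an LHS spectral sequence, or a direct graph-of-groups argument) are gestures rather than proofs, and filling them in is essentially equivalent to reproving the Peterson--Thom theorem. In fact, the cleanest way to see that your desired inequality holds is to \emph{apply} Peterson--Thom: if $\beta_1^{(2)}(G_{n+1})>\beta_1^{(2)}(G_n)$ then $G_n$ lies in a proper almost malnormal $M<G_{n+1}$, but $G_n\cap g_nG_ng_n^{-1}$ infinite then forces $g_n\in M$, a contradiction. So the missing lemma is exactly the theorem the paper takes as a black box; you should cite it rather than attempt a Mayer--Vietoris substitute, which as written does not work.
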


\noindent \textbf{Coxeter Groups} We now provide explicit examples in which the hypotheses of Theorem \ref{betti} are satisfied using the theory of Coxeter groups. Niblo and Reeves have shown in \cite{catcoxeter} that every finitely generated Coxeter group $W=W(S)$ acts properly discontinuously on a locally finite, finite dimensional CAT(0) cube complex $X_W$. Sageev's work on ends of group pairs then implies that $e(W, H)\geq 2$ for each wall stabiliser $H<W$ and it is easy to deduce that $H$ is the centralizer of a reflection. If $W$ is a Coxeter group with $\beta^2_1(W)  \neq 0$, then one can extract additional information about the structure of $W$ using Theorem \ref{betti} and Corollary \ref{positiveGbetti}. 

To start with, let $W$ be the Coxeter group generated by the reflections $s_1, \ldots, s_8$ such that $s_1$ commutes with each of $s_4$, $s_5$ and $s_6$ while the pairwise product of $s_1$ with each of $s_2$, $s_3$, $s_7$ and $s_8$ is of infinite order. The pairwise products of the generators $s_4$, $s_5$ and $s_6$ are of order 3. The remaining pairwise products are finite but greater than $50$. Then $W$ is a one-ended Coxeter group whose first $\ell^2$ betti number is non-zero, as can be seen from applying Theorem 3.2 of \cite{malnormal}. 

Nuida describes the centralizers of reflections in his paper \cite{nuida} and from his work, one deduces that the centralizer $C$ of the reflection $s_1$ is precisely $T(3,3,3) \times \langle s_1 \rangle$. Here, $T(3,3,3)$ is the triangle group obtained from the parabolic subgroup generated by  $s_4$, $s_5$ and $s_6$. As explained earlier, $e(W,C) \geq 2$. Moreover $C$ contains $\mathbb{Z}^2$ as a finite index subgroup and therefore $\beta^2
_1(C)=0 $. 

Using the same strategy one can build a whole family of examples using the hyperbolic triangle groups $T(p,q,r)$, where $p$, $q$ and $r$ are positive integers satisfying $\frac{1}{p} + \frac{1}{q} + \frac{1}{r} < 1$. This time, take $W_n$ to be a Coxeter group generated by $n$ reflections, $s_1$, $\dots, s_n$. As in the earlier example, the reflection $s_1$ commutes with precisely 3 other reflections $s_2$, $s_3$ and $s_4$ while the product of $s_1$ with each of $s_5, \ldots, s_n$ has infinite order. For sake of simplicity, we set the order of all pairwise products not already specified to be $n^2$. As before the centralizer $C(s_1)$ is precisely $T(p,q,r) \times \langle s_1 \rangle$ and $e(W_n,C(s_1)) \geq 2$. Using Theorem 3.2 of \cite{malnormal} again, we have 
\[\beta^2_1(W) \geq \frac{n}{2} -1 - \left(\frac{3}{2}+ \frac{1}{p} + \frac{1}{q} +\frac{1}{r} + \frac{1}{n^2}\left(\frac{n(n-1)}{2}-\left(n-1 +3\right)\right)\right)\]

\noindent Now $\beta^2_1(C(s_1))$ is one-half of $\beta^2_1(T(p,q,r))$. Let $\chi (.)$ denote the orbifold Euler characteristic of a group. One computes that \[\chi(T(p,q,r))= \frac{1}{2}\left( \frac{1}{p} + \frac{1}{q} + \frac{1}{r} -1\right)\ \ \ \ \] 
\noindent Moreover, $\beta ^2_1 (T(p,q,r))$ $=-\chi(T(p,q,r))$. This is a consequence of Atiyah's formula relating the $\ell^2$-Euler characteristic to the orbifold Euler characteristic. But for Fuchsian groups and in particular triangle groups, the argument may be simplified. Every triangle group contains a surface subgroup of finite index. Suppose $T(p,q,r)$ contains a surface subgroup $H \cong \pi_1(S_g)$ (here, $g$ is the genus) of index $k$. From first principles, $\beta^2_1(H)= -\chi(S_g)$. Now, both $\beta^2_1(.)$ and $\chi(.)$ are multiplicative on indices hence
\[\beta^2_1(T(p,q,r))=k \beta^2_1(H) = k (-\chi(S_g))=- \chi(T(p,q,r))\]

\noindent Given $p$, $q$ and $r$, for $\beta^2_1(W) > \beta^2_1(C(s_1))$ to hold, we need \[ \frac{1}{2}\left( n - 6 + \frac{3}{n} + \frac{4}{n^2} \right) -\left( \frac{1}{p} + \frac{1}{q} + \frac{1}{r} \right)> -\frac{1}{2}\chi(T(p,q,r)) \]
\noindent In particular if $n-6 > 3\chi(T(p,q,r)) + 2$ then $\beta^2_1(W_n) > \beta^2_1(C(s_1))$.

One may specialise to the well-known $(2,3,7)$ triangle group, which contains the fundamental group of the Klein's quartic (a surface of genus 3) as a subgroup of index 336. Since $\beta^2_1(T(2,3,7))= \frac{1}{84}$, one can choose $n$ to be 8 and get a splitting of $W_8$ over $T(2,3,7) \times \mathbb{Z}/2\mathbb{Z}$. This splitting may also be obtained from visual decompositions of Coxeter groups into amalgams.  

\medskip

It is worth noting here that the proof of Theorem \ref{betti} applies in more generality. 

\begin{definition}\label{am} We will say that a subgroup $H$ of a group $G$ is \textit{almost malnormal} if for every $g \notin H$, the intersection $H \cap H^g$ is finite. 

The almost malnormal closure of a subgroup $H<G$ is the  intersection of the almost malnormal subgroups of $G$ containing $H$.
 \end{definition}
\medskip

We have the following generalisation of \cite[Theorem 4.9]{torus}. 

\begin{theorem} \label{almostmalnormal}
Let $H \leq G$ be one-ended groups such that $\tilde{e}(G,H) \geq 2$. If the almost malnormal closure $K$ of $H$  is not equal to $G$ then $G$ splits over a subgroup of $K$. 
\end{theorem}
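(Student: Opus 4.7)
The plan is to build a nontrivial action of $G$ on a simplicial tree $T$ whose edge stabilisers lie in $K$, and then invoke Bass--Serre theory. The strategy runs parallel to the proof of Theorem 4.9 in \cite{torus}, which treats the special case where $H$ itself is almost malnormal; the main new task is to arrange that the edge stabilisers land in the almost malnormal \emph{closure} of $H$ rather than in $H$.

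First I would observe that $K$ is itself almost malnormal: if $K_1,K_2$ are almost malnormal subgroups of $G$ and $g\notin K_1\cap K_2$, then $g\notin K_i$ for some $i$, so $(K_1\cap K_2)\cap(K_1\cap K_2)^g\le K_i\cap K_i^g$ is finite; the same argument applies to arbitrary intersections. In particular $H\cap H^g\le K\cap K^g$ is finite for every $g\notin K$. Next, using $\tilde{e}(G,H)\ge 2$ together with the fact that $G$ is one-ended (so that the space of $G$-invariants in $\mathcal F(G)\backslash \mathcal P(G)$ has dimension one), fix a subset $A\subseteq G$ whose class $[A]\in\mathcal F(G)\backslash\mathcal P(G)$ is $H$-invariant but not $G$-invariant: $A+Ah\in\mathcal F(G)$ for all $h\in H$, while $[gA]\ne[A]$ for some $g\in G$.

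The technical heart is to show that for every $g\notin K$, the pair $\{[A],[gA]\}$ is \emph{nested}, i.e., one of the four corners $A\cap gA$, $A\cap(gA)^c$, $A^c\cap gA$, $A^c\cap(gA)^c$ lies in $\mathcal F(G)$, and moreover $[gA]\ne[A],[A^c]$. Each corner is almost invariant under the left action of the finite group $H\cap H^g$; combining this with the $H$-almost invariance of $A$, the $H^g$-almost invariance of $gA$, and the one-endedness of $H$, a Kropholler--Roller type analysis forces at least one corner to be finite. The non-equality portion uses that the setwise stabiliser of $[A]$ is a subgroup containing $H$ whose behaviour with respect to conjugation is constrained by the almost malnormality of $K$.

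Once nestedness has been established outside $K$, the $G$-orbit of $\{[A],[A^c]\}$ feeds into the Dunwoody--Sageev structure construction (equivalently, the Almost Stability Theorem underlying Theorem \ref{stallings}) to produce a simplicial $G$-tree $T$. Non-$G$-invariance of $[A]$ ensures the $G$-action has no global fixed vertex, and every edge stabiliser of $T$ is a subgroup of $K$ by the preceding paragraph. Bass--Serre theory then delivers the desired splitting. The main obstacle is the non-equality clause in the technical heart: ensuring that the edge stabilisers of $T$ really lie inside $K$, not merely contain $H$. This is where the definition of $K$ as an intersection of \emph{all} almost malnormal subgroups containing $H$ is used in a way that goes essentially beyond \cite[Theorem 4.9]{torus}.
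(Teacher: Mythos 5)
Your outline goes in the right general direction (nested pattern $\Rightarrow$ tree $\Rightarrow$ Bass--Serre), but it skips over exactly the step that the paper's proof is built around, and that omission is not just a matter of detail. Starting from an $H$-almost invariant set $A$, your nestedness claim ``$\{[A],[gA]\}$ is nested for every $g\notin K$'' says nothing about translates $gA$ with $g\in K\setminus H$: there $H\cap H^g$ need not be finite, your corner argument gives nothing, and the $G$-orbit of $\{[A],[A^c]\}$ may fail to be nested. The Dunwoody/Almost-Stability machinery needs a $G$-invariant nested family, so without controlling crossings inside $K$ you cannot produce the tree. Also, your corner argument as stated is too weak: every subset of $G$ is almost invariant under a finite group, so being $(H\cap H^g)$-almost invariant does not by itself force a corner to be finite; you would need to combine it with one-endedness of $G$ to deduce $\tilde{e}(G,H\cap H^g)=1$ (which is what the paper explicitly records for the subgroups $K\cap K^g$).

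The paper resolves precisely this difficulty by \emph{replacing $H$ by $K$ before building anything}. Monotonicity of $\tilde{e}(G,\cdot)$ gives $\tilde{e}(G,K)\ge 2$. The real technical heart, which your proposal does not contain, is the cohomological computation $H^1(K,\mathcal{F}_K(G))=0$: via the Mackey decomposition $\mathcal{F}_K(G)\cong \mathcal{P}K\oplus\bigoplus_{g}\mathrm{Ind}^K_{K\cap K^g}(\cdots)$, the non-trivial double-coset summands are induced from finite groups $K\cap K^g$, and the observation $\prod_I R\cong R\otimes\field^I$ for a finite group algebra $R$ shows those summands are free $\field K$-modules. Shapiro kills $H^1(K,\mathcal{P}K)$ and Stallings kills $H^1(K,\text{free})$ because $K$ is one-ended --- a fact the paper establishes by a separate tree-action argument, and which your proposal also omits. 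Vanishing of $H^1(K,\mathcal{F}_K(G))$ lets one modify a $K$-almost invariant set $B$ to a $K$-bi-invariant one $A=B+C$, and only then does the splitting follow (via Theorem 5.3 of \cite{torus}, together with $\tilde{e}(G,K\cap K^g)=1$ for $g\notin K$). So the proposal identifies the correct goal but is missing the upgrade from $H$ to $K$, the one-endedness of $K$, and the vanishing-of-$H^1$ mechanism that actually produces the bi-invariant nested family; as written it would not close.
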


The \emph{Kropholler conjecture} is a long standing conjecture of Kropholler and Roller from \cite{KR}. To read more about the current status of the conjecture, see \cite{Nibloshort}. We show that our techniques give further evidence towards the conjecture by verifying it for pairs of groups $H \leq G$ satisfying $\beta^2_1(G) > \beta ^2_1(H)$. This is the content of Proposition \ref{conj2}. 

\medskip

The main protagonists of our next theorem are Poincar\'e duality groups. An introduction to the notion of Poincar\'e duality may be found in \cite{torus}. Fundamental groups of closed aspherical manifolds are Poincar\'e duality groups. Whether the converse is true for finitely presented groups is the subject of a well known conjecture. One can show that the only one-dimensional Poincar\'e duality group is $\mathbb{Z}$. That all Poincar\'e duality of dimension 2 are surface groups is a deep theorem established by Bieri, Eckmann, Muller and Linnell. For each $n \geq 4$, Bestvina-Brady groups provide examples of Poincare duality groups which are not finitely presented and hence are not fundamental groups of closed aspherical manifolds. 

We provide the following splitting theorem for Poincar\'e duality groups which may be viewed as an analogue of the torus theorem and which plays a central role in the topological superrigidity theorem established in \cite{karniblotop}. 

\begin{theorem}  \label{tsubgroup} Let $G$ be a Poincar\'e duality group of dimension $n$. Suppose that $H$ is an $(n-1)$-dimensional Poincar\'e duality subgroup of $G$ and that $H$ has property (T). Then $G$ splits over a subgroup commensurable with $H$. \end{theorem}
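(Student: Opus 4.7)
The plan is to apply Theorem~\ref{almostmalnormal} to the pair $(G,H)$ and then use property (T) of $H$ to upgrade the edge subgroup of the resulting splitting from one merely lying in the almost malnormal closure of $H$ to one commensurable with $H$. As a preliminary observation, since $\mathbb{Z}$ and closed surface groups are not Kazhdan, the hypothesis that $H$ is a $PD^{n-1}$ group with property (T) forces $n\geq 4$, so both $G$ and $H$ are one-ended $PD$ groups. The first essential input is the inequality $\tilde{e}(G,H)\geq 2$; this follows from the Poincar\'e--Lefschetz duality computation of Kropholler and Roller in \cite{KR}, in which a codimension-one $PD$ subgroup is shown to produce two distinct $H$-invariant almost invariant subsets of $G$.

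Next I would analyse the almost malnormal closure $K$ of $H$ together with the commensurator $C=\textrm{Comm}_G(H)$. Because $H$ is a $PD^{n-1}$ group, any subgroup of $H$ of cohomological dimension $n-1$ must have finite index in $H$, so $g\in C$ if and only if $H\cap H^g$ has cohomological dimension $n-1$. From this one deduces that $K$ lies inside $C$. To apply Theorem~\ref{almostmalnormal} one needs $K\neq G$: if instead $K=G$, then $G$ commensurates $H$, and one must argue separately that combining this with $H$ being strictly lower-dimensional than $G$ and with property (T) of $H$ yields a contradiction.

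Assuming $K\neq G$, Theorem~\ref{almostmalnormal} produces a splitting of $G$ over a subgroup $L\leq K\leq C$. Let $T$ be the Bass--Serre tree of this splitting. By Watatani's fixed point theorem, property (T) of $H$ forces $H$ to fix a vertex of $T$, so $H$ lies in a vertex stabilizer. The edge subgroup $L$ commensurates $H$ since $L\leq C$. A Bieri--Eckmann style dimension count using the $PD^n$ structure on $G$ shows that $L$ is itself $PD^{n-1}$ (edge groups of splittings of $PD^n$ groups are codimension-one $PD$). Since both $H$ and $L$ are $PD^{n-1}$ and $L$ commensurates $H$, the intersection $H\cap L$ has cohomological dimension $n-1$ and is therefore of finite index in each, giving the desired commensurability.

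The main obstacle is the case $K=G$, i.e., when $H$ is commensurated by all of $G$. Here one must exploit the interplay of property (T), the $PD^n$ structure of $G$, and the codimension-one nature of the pair to either rule out this configuration or to construct the splitting directly; this is the most delicate step of the argument.
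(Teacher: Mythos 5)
Your proposal routes the argument through Theorem~\ref{almostmalnormal}, which is a genuinely different strategy from the paper's, and it has gaps that in my view cannot be repaired along the lines you sketch.

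The paper's proof does not use Theorem~\ref{almostmalnormal} at all. After establishing $\tilde e(G,H)=2$ by the same Poincar\'e duality computation you describe, it invokes Lemma~2.5 of \cite{KR} to pass to a finite-index subgroup $H'\leq H$ with $e(G,H')=2$, then applies Sageev's cubulation to get a $G$-action on a CAT(0) cube complex in which $H'$ stabilises a hyperplane $J$. Here is where property (T) enters: by the Niblo--Roller fixed-point theorem \cite{propt}, the Kazhdan group $H'$ has a fixed point in the hyperplane complex $J$, and Sageev's Lemma~2.5 of \cite{fixedpoint} then produces a proper $H'$-almost invariant set $B$ with $H'BH'=B$. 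Finally, the singularity obstruction of $B$ is shown (via Strebel's theorem and Lemma~5.1 of \cite{torus}) to lie inside $\mathrm{Comm}_G(H')$, so Theorem~B of \cite{N} gives a splitting over a subgroup commensurable with $H'$, hence with $H$. The almost malnormal closure never appears.

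Your approach has three concrete problems. First, you acknowledge that you have no argument in the case $K=G$. This is not a corner case that can be brushed aside: nothing in the hypotheses rules out $H$ being commensurated by, or even normal in, $G$, and this is exactly the regime in which Theorem~\ref{almostmalnormal} gives nothing. Second, the assertion that $K\subseteq\mathrm{Comm}_G(H)$ is not justified: your characterisation of $g\in\mathrm{Comm}_G(H)$ in terms of $\mathrm{cd}(H\cap H^g)$ is correct, but an almost malnormal subgroup $K\supseteq H$ controls intersections $K\cap K^g$ for $g\notin K$, which tells you nothing about $H\cap H^k$ for $k\in K$; indeed the smallest almost malnormal subgroup containing $H$ need not commensurate $H$. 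Third, the claim that edge groups of arbitrary splittings of $PD^n$ groups are automatically $PD^{n-1}$ is stronger than what the Bieri--Eckmann theory of $PD$-pairs delivers without further finiteness hypotheses on the edge group, and you would need to verify those. Even setting these aside, using property (T) via a fixed point on the Bass--Serre tree of a splitting you have already produced only places $H$ in a vertex group; it does not control the edge group, which is what the theorem is actually about. The paper's use of the cubical fixed-point theorem is what generates the bi-invariant almost invariant set in the first place, and this is the step your proposal is missing.
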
 

For example suppose that $M$ is a closed aspherical manifold of dimension $4n+1$, $n \geq 2$ and that $N$ is a quarternionic hyperbolic closed manifold of dimension ${4n}$ which admits a $\pi_1$-injective map into $M$.  Since $\pi_1(N)$ has property $(T)$ the theorem shows that $\pi_1(M)$ is a non-trivial amalgam or HNN extension over a subgroup commensurable with $\pi_1(N)$. Note that the presence of a codimension one property (T) subgroup in Theorem \ref{tsubgroup} becomes an obstruction to the ambient group having property (T). 

The paper is organised as follows. In section \ref{relativeends} we expand on the formal definition of the two end invariants $e(G,H)$ and $\tilde{e}(G,H)$ alluded to above. In section \ref{bettisection} we give the proof of Theorems \ref{betti} and \ref{almostmalnormal} and discuss the Kropholler conjecture. In section \ref{PropertyT} we deal with Poincar\'e duality and establish Theorem \ref{tsubgroup}.

\medskip

\noindent \textbf{Acknowledgements } We are grateful to Peter Kropholler, Indira Chatterji and Ashot Minasyan for their comments and suggestions.

\section{Relative Ends}\label{relativeends}

Throughout the paper we will denote the field of order two by $\field$. Now let $G$ be a group and $H$ be a subgroup of $G$. Given an $H$ module $M$ one may form a $G$ module using the functors $\textrm{Hom}_H(\field[G],\ \ )$ and $\field[G] \otimes_H$. More precisely, choosing a set $S$ of right coset representatives for $H \leq G$ we have
\[\coind ^G _H M := \hom_H(\field[G], M) \cong \prod_{g \in S} Mg \]
\[\ind ^G _H M := \field[G] \otimes_H M \cong \bigoplus_{g \in S} Mg\]

Let $\mathcal{P}G$ denote the collection of all subsets of $G$. Then, $\mathcal{P}G$ is an $\field$-vector space with respect to the operation of symmetric difference. One checks that $\mathcal{P}G$ is also a $G$ module. Moreover, $\mathcal{P}G \cong$ $\coind ^G _1 \field$. On the other hand \[\mathcal{F}_H (G)= \{A \subseteq G: A \subseteq HF \textrm{\ for some finite set F} \}\] is the $\field G$-module $\ind ^G_H \mathcal{P}H$.
Similarly the power set $\mathcal{P}(H\backslash G)$ of $H \backslash G$ and the collection of finite subsets of $H\backslash G$, written $\mathcal{F}(H
\backslash G)$ are $\field[G]$ modules. In fact, $\mathcal{P}(H\backslash G)$ $\cong \coind ^G _H \field$ and $\mathcal{F}(H \backslash G)$ $\cong \ind ^G _H \field$. 

\begin{definition} The elements of $\mathcal{F}_H(G)$ are said to be \textit{$H$-finite} and the elements of $\left(\mathcal{F}_H(G) \backslash \mathcal{P}G \right) ^G$ are called \textit{$H$-almost invariant sets}. \end{definition}

\begin{definition} The algebraic end invariant is defined as \[\tilde{e}(G,H)=\dim_{\field} \left(\mathcal{F}_H(G) \backslash \mathcal{P}G \right) ^G \] while the geometric end invariant is defined as \[e(G,H)=\dim_{\field} \left(\mathcal{F}(H\backslash G)) \backslash \mathcal{P}(H\backslash G) \right)^G.\] \end{definition}

We collect together the properties of the end invariants defined above which we will later need. The interested reader may find more details in \cite{KR}.

\begin{proposition}\label{basics} Let $H \leq K \leq G$ be groups. Then the following hold.
\begin{enumerate}
\item $e(G,1)=e(G)=\tilde{e}(G,1)$. 
\item $e(G,H) = 0 = \tilde{e}(G,H)$ if and only if $H$ has finite index in $G$. 
\item If $H$ has infinite index then $\tilde{e}(G,H) $  $= 1 + \dim_{\field} H^1(G, \mathcal{F}_H(G))$. 
\item If $K$ has infinite index then $\tilde{e}(G,H)$ $\leq $ $\tilde{e}(G,K)$.
\item $e(G,H) = e(X)$, where $X$ is the coset graph of $G$ with respect to $H$. 
\item $e(G,H) \leq \tilde{e}(G,H)$. 
\end{enumerate}
\end{proposition}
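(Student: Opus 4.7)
Parts (1), (2), (5), and (6) are direct consequences of the definitions, while (3) and (4) require some work with long exact sequences in group cohomology. I would treat them in the order listed, with (4) being the only step of real substance.

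For (1), both $e(G,1)$ and $\tilde e(G,1)$ collapse to the defining expression for $e(G)$ upon noting $\mathcal{F}_1(G) = \mathcal{F}(G)$ and $1\backslash G = G$. For (2), if $[G:H]<\infty$ then $G = HF$ for a finite transversal $F$, so $G \in \mathcal{F}_H(G)$ and $H\backslash G$ is finite, and both quotient modules vanish. Conversely, when $[G:H]=\infty$ the class of the whole set $G$ is a nonzero $G$-invariant element of each quotient, so both end invariants are at least $1$.

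For (3), apply the long exact sequence in cohomology to
\[0 \to \mathcal{F}_H(G) \to \mathcal{P}(G) \to \mathcal{P}(G)/\mathcal{F}_H(G) \to 0.\]
Shapiro's Lemma applied to $\mathcal{P}(G) \cong \coind_1^G \field$ gives $H^0(G,\mathcal{P}(G)) = \field$ and $H^1(G,\mathcal{P}(G)) = 0$, and (2) already shows $\mathcal{F}_H(G)^G = 0$ when $[G:H]=\infty$; the sequence therefore truncates to
\[0 \to \field \to \bigl(\mathcal{P}(G)/\mathcal{F}_H(G)\bigr)^G \to H^1(G, \mathcal{F}_H(G)) \to 0,\]
which is the formula claimed. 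For (5), the algebraic expression $\dim(\mathcal{P}(H\backslash G)/\mathcal{F}(H\backslash G))^G$ coincides with the classical combinatorial number of ends of the coset graph $X$, via the standard equivalence between algebraic and combinatorial ends of locally finite $G$-graphs. For (6), the preimage map $\pi^{-1}:\mathcal{P}(H\backslash G)\to\mathcal{P}(G)$ is a $G$-equivariant injection carrying $\mathcal{F}(H\backslash G)$ into $\mathcal{F}_H(G)$, so it descends to a $G$-equivariant injection between the quotients; passing to $G$-invariants yields $e(G,H) \leq \tilde e(G,H)$.

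The main obstacle is part (4). Applying $(\cdot)^G$ to the short exact sequence
\[0 \to \mathcal{F}_K(G)/\mathcal{F}_H(G) \to \mathcal{P}(G)/\mathcal{F}_H(G) \to \mathcal{P}(G)/\mathcal{F}_K(G) \to 0\]
yields the bound $\tilde e(G,H) \leq \dim(\mathcal{F}_K(G)/\mathcal{F}_H(G))^G + \tilde e(G,K)$, so the inequality reduces to the vanishing $(\mathcal{F}_K(G)/\mathcal{F}_H(G))^G = 0$ when $[G:K]=\infty$. To establish it, take $A \in \mathcal{F}_K(G)$ satisfying $gA + A \in \mathcal{F}_H(G)$ for every $g \in G$, and write the support of $A$ as a finite set of right $K$-cosets $Kg_1, \ldots, Kg_n$. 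Since the $G$-action permutes right $K$-cosets and $[G:K]=\infty$, Neumann's Lemma allows us to pick $g \in G$ so that the supports of $A$ and $gA$ are disjoint; then $gA + A$ is the disjoint union $gA \sqcup A$, which contains $A$. The hypothesis forces this set to lie in $\mathcal{F}_H(G)$, and since $\mathcal{F}_H(G)$ is closed under taking subsets we conclude $A \in \mathcal{F}_H(G)$, completing the vanishing and with it the proof.
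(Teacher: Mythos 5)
The paper does not prove this proposition, deferring instead to Kropholler--Roller \cite{KR}; your arguments are correct and follow the standard route used there. In particular the long exact sequence together with Shapiro's Lemma for $\mathcal{P}(G)\cong\coind^G_1\field$ gives (3), and B.~H.~Neumann's lemma on covering a group by cosets of infinite-index subgroups is exactly what is needed to establish the vanishing of $(\mathcal{F}_K(G)/\mathcal{F}_H(G))^G$ in (4).
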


Note that the algebraic end invariant for a group with infinitely many ends with respect to any of its infinite index subgroups is infinite. For instance, if $G$ is the non-abelian free group of rank 2 and $G'$ denotes its commutator subgroup, then $\tilde{e}(G,G') = \infty$ (whereas $e(G,G')=2$). Clearly, the algebraic end invariant gives useful information only about one-ended groups.

\section{Proof of Theorems \ref{betti} and \ref{almostmalnormal}} \label{bettisection}

\begin{mainthm} Let $H \leq G$ be discrete and countable one-ended groups such that $\beta_1^{(2)}(G)$ $> \beta_1^{(2)}(H)$. If $\tilde{e}(G,H) \geq 2$ then $G$ splits over a subgroup of the almost  malnormal closure of  $H$ in $G$. \end{mainthm}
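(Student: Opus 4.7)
The strategy is to reduce Theorem~\ref{betti} to Theorem~\ref{almostmalnormal} by exploiting the $\ell^2$-Betti number hypothesis to force the almost malnormal closure $K$ of $H$ in $G$ to be a proper subgroup; Theorem~\ref{almostmalnormal} then immediately yields a splitting of $G$ over a subgroup of $K$, which is exactly the desired conclusion.

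I would argue by contradiction, assuming $K=G$.  The key preparatory step is to realise $K$ as the limit of a (possibly transfinite) ascending chain $H = K_0 \leq K_1 \leq \cdots$ in which $K_{\alpha+1}$ is generated by $K_\alpha$ together with a chosen element $g_\alpha \notin K_\alpha$ such that $K_\alpha \cap K_\alpha^{g_\alpha}$ is infinite, with directed unions taken at limit ordinals.  Because any intersection of almost malnormal subgroups of $G$ is itself almost malnormal (if $g\notin L_i$ for some $i$ then $L \cap L^g \subseteq L_i \cap L_i^g$ is finite), a cardinality argument shows that the chain stabilises at an almost malnormal subgroup containing $H$; a straightforward transfinite induction then identifies this terminal subgroup with $K$ by minimality.

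The crucial input would be a Peterson--Thom-style monotonicity estimate $\beta_1^{(2)}(K_{\alpha+1}) \leq \beta_1^{(2)}(K_\alpha)$ at each successor stage, justified morally by the fact that $K_\alpha$ meets its $g_\alpha$-conjugate in an infinite subgroup and thereby plays the role of an s-normal subgroup of $\langle K_\alpha, g_\alpha\rangle$.  Continuity of $\beta_1^{(2)}$ under directed unions of subgroups handles limit stages, and transfinite induction propagates the estimate all the way up the chain, producing $\beta_1^{(2)}(G) = \beta_1^{(2)}(K) \leq \beta_1^{(2)}(H)$, which contradicts the standing hypothesis.  Hence $K \neq G$, and Theorem~\ref{almostmalnormal} finishes the proof.

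The main obstacle will be the successor-stage inequality.  The cleanest form of the Peterson--Thom monotonicity theorem is phrased for s-normal subgroups, a condition genuinely stronger than having a single $g_\alpha$ with $K_\alpha \cap K_\alpha^{g_\alpha}$ infinite, since conjugation by words like $g_\alpha^2$ does not obviously preserve an infinite intersection with $K_\alpha$. One must therefore either refine the ascending chain so that full s-normality is restored at each step, or invoke a single-element version of the inequality tailored to the almost malnormal closure construction. A secondary technical point is ensuring that the one-endedness of $H$ and $G$ (together with finiteness of $\beta_1^{(2)}(H)$) propagates to each intermediate $K_\alpha$, so that the $\ell^2$-invariant machinery applies without degeneracy along the whole chain.
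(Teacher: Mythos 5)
Your high-level strategy is exactly the paper's: show that the almost malnormal closure $K$ of $H$ is a proper subgroup of $G$ and then invoke Theorem~\ref{almostmalnormal}. Your identification of the terminal subgroup of your transfinite chain with $K$ is also correct and mirrors the argument the paper uses inside the proof of Theorem~\ref{almostmalnormal} to show $K$ itself is almost malnormal.

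Where you diverge is in the proof that $K \neq G$. The paper does not attempt to re-derive the $\ell^2$-monotonicity from scratch; it simply quotes Peterson--Thom~\cite{malnormal}: if $\beta_1^{(2)}(G) > \beta_1^{(2)}(H)$ for a torsion-free discrete countable group $G$, then $H$ is contained in a proper malnormal subgroup of $G$, and the same argument drops torsion-freeness at the cost of weakening malnormal to almost malnormal. That proper almost malnormal subgroup $H'$ at once forces $K \leq H' < G$, and Theorem~\ref{almostmalnormal} applies. You instead try to reconstruct the Peterson--Thom input via a transfinite chain $K_\alpha$ and a claimed estimate $\beta_1^{(2)}(K_{\alpha+1}) \leq \beta_1^{(2)}(K_\alpha)$ at each successor stage. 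As you yourself flag, this estimate is not a consequence of the elementary s-normal monotonicity inequality: having a single $g_\alpha$ with $K_\alpha \cap K_\alpha^{g_\alpha}$ infinite does not make $K_\alpha$ s-normal in $\langle K_\alpha, g_\alpha\rangle$. That gap is genuine if you insist on building the inequality by hand, and closing it is not a routine refinement.

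The good news is that the ``single-element version of the inequality tailored to the almost malnormal closure construction'' you are looking for is precisely the content of Peterson--Thom's theorem about wq-normal subgroups. Observe that if $K = G$ then for every intermediate $H \leq L < G$ there exists $g \in G \setminus L$ with $L \cap L^g$ infinite (otherwise $L$ would be a proper almost malnormal subgroup containing $H$), i.e.\ $H$ is wq-normal in $G$, whence Peterson--Thom give $\beta_1^{(2)}(G) \leq \beta_1^{(2)}(H)$ directly. So your proof should not attempt to redo their transfinite induction; it should quote their result, exactly as the paper does, and then the reduction to Theorem~\ref{almostmalnormal} is complete. Your secondary worry about one-endedness of the intermediate $K_\alpha$ also dissolves once the chain is discarded in favour of the citation.
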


Peterson and Thom showed in \cite{malnormal} that if $\beta_1^{(2)}(G)$ $> \beta_1^{(2)}(H)$ for a torsion free discrete countable group $G$ then there exists a proper malnormal subgroup $H'$ of $G$ that contains $H$. If one drops the hypothesis that $G$ is torsion free then the same argument shows that $H'$ is almost malnormal (see Definition \ref{am}). So Theorem \ref{betti} follows directly from Theorem \ref{almostmalnormal}.

\begin{amthm}
Let $H \leq G$ be one-ended groups such that $\tilde{e}(G,H) \geq 2$. If the almost malnormal closure $K$ of $H$  is not equal to $G$ then $G$ splits over a subgroup of $K$. 
\end{amthm}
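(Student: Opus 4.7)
The plan is to reduce the statement to the splitting theorem \cite[Theorem 4.9]{torus}, applied with the almost malnormal closure $K$ (rather than $H$) playing the role of the subgroup over which $G$ splits. Two preliminary facts about $K$ are needed. First, $K$ is itself almost malnormal: if $g \notin K$, then by the definition of $K$ as an intersection there is an almost malnormal $K' \supseteq H$ avoiding $g$, whence $K \cap K^g \subseteq K' \cap (K')^g$ is finite. Second, $K$ has infinite index in $G$; otherwise, for any $g \in G \setminus K$ the intersection $K \cap K^g$ would have finite index in the infinite group $K \supseteq H$ (which is infinite since $H$ is one-ended), contradicting almost malnormality.

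With these facts in hand, Proposition \ref{basics}(4) applied to the chain $H \leq K \leq G$ yields $\tilde{e}(G,K) \geq \tilde{e}(G,H) \geq 2$. A representative $K$-almost invariant subset $B \subseteq G$ whose class is non-trivial in $(\mathcal{F}_K(G) \backslash \mathcal{P}(G))^G$ can then be fed into the Kropholler-Roller / Dunwoody construction of a $G$-tree dual to the $G$-orbit of the wall $\{B, G \setminus B\}$. Non-triviality of the class of $B$ ensures that this $G$-tree has no global fixed vertex, so produces a genuine splitting of $G$.

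The decisive step is to identify the edge stabilisers as subgroups of $K$, and this is precisely where almost malnormality is used. If $g \in G$ stabilised the wall $\{B, G \setminus B\}$ (up to $K$-finite error) but lay outside $K$, then the almost-invariance conditions on $B$ together with $KB = B$ and $K(Bg) = Bg$ would exhibit an infinite subgroup of $K \cap K^g$, contradicting almost malnormality. Since $G$ is one-ended, Stallings' theorem (Theorem \ref{stallings}) excludes splittings over finite subgroups, so the edge groups are forced to be infinite subgroups of $K$. The main obstacle I anticipate is a careful audit that the stabiliser analysis in \cite[Theorem 4.9]{torus}, which is written for malnormal subgroups, goes through when "malnormal" is weakened to "almost malnormal"; this should amount to replacing "trivial intersection" by "finite intersection" in the key commensurator computations, which is harmless in the one-ended setting because of Stallings' theorem.
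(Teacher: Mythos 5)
The essential step that your proposal elides is precisely where the paper does all its work: producing a $K$-almost invariant set $A$ with $AK = A$. Having $\tilde{e}(G,K) \geq 2$ gives you a nontrivial class in $(\mathcal{F}_K(G)\backslash\mathcal{P}G)^G$, i.e.\ a proper $K$-almost invariant set $B$, but nothing forces $B$ to be invariant under right $K$-translation. Without that side-invariance the Dunwoody/Sageev tree dual to the orbit of the wall $\{B, G\setminus B\}$ need not have edge stabilisers inside (or even commensurable with) $K$ — this is exactly the content of the open Kropholler Conjecture, which you cannot assume. The wall-stabiliser calculation you sketch in the penultimate paragraph already presupposes $KB = B$ and $K(Bg) = Bg$; that is the conclusion of the hard step, not a starting point.

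What is actually missing is the following chain, which the paper carries out and which you should not bury under a ``careful audit'' remark. First one must show $e(K) = 1$: $K$ is infinite (it contains the one-ended $H$), not two-ended ($H$ is not virtually $\mathbb{Z}$), and not infinitely-ended — the latter uses minimality of $K$ in an essential way: a Stallings tree for $K$ would have an infinite vertex stabiliser $A \geq H$, and almost malnormality of $K$ in $G$ forces $A$ to be almost malnormal in $G$, contradicting minimality of $K$. Second, one computes the restriction of $\mathcal{F}_K(G)$ to $K$ across double cosets and, using that $K\cap K^g$ is finite for $g\notin K$, identifies the non-trivial summands as free $\field K$-modules; combined with Shapiro's Lemma and the one-endedness of $K$ just established (via Theorem~\ref{stallings}(2)), this gives $H^1(K,\mathcal{F}_K(G)) = 0$. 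Third, this vanishing converts the derivation $x \mapsto B + Bx$ into a principal one, so $A := B + C$ is a proper $K$-almost invariant set with $AK = A$. Only then — together with the observation $\tilde{e}(G, K\cap K^g) = 1$ for $g \notin K$, which uses one-endedness of $G$ — does Kropholler's Theorem~5.3 of \cite{torus} apply to give the splitting over a subgroup of $K$. Your proposal correctly sets up the monotonicity and almost-malnormality preliminaries, but the cohomological vanishing and the proof that $e(K)=1$ are the crux, and both are absent.
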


\begin{proof} Let $H \leq G$ be one-ended groups such that $H$ is contained in a proper almost malnormal subgroup of $G$. 

Set $\Sigma= \{K < G : H \leq K \textrm{ and } K \textrm{ is almost malnormal in
}G\}$. Let $(K_j)_{j \in J}$ be elements of $\Sigma$ and suppose $g \notin \cap_{j \in J} K_j$. Then $g$ does not belong to $K_j$ for at least one $j \in J$. As $K_j$ is almost malnormal in $G$, $K_j \cap K_j^g$ is finite. Thus, $(\cap K_j) \cap (\cap K_j)^g $ is finite. We conclude that any intersection of elements of $\Sigma$ is almost malnormal and that $\Sigma$ has a minimal element, the almost malnormal closure of $H$ which we will denote $K$. We will now show that $\tilde{e}(G, K)\geq 2$ and $e(K)=1$.

As the subgroup $K$ is almost malnormal in $G$ and $G$ is infinite, $K$ has infinite index in $G$. As noted in Proposition \ref{basics}  the algebraic end invariant $\tilde{e}(G,.)$ is monotonic for infinite index subgroups, thus $\tilde{e}(G,K)$ $\geq \tilde{e}(G,H)$ and  $\tilde{e}(G,K)\geq 2$. 

The presence of a one ended subgroup $H$ in $K$ limits the possibilities for the value of $e(K)$. Firstly $K$ is infinite and so $e(K) \neq 0$. A group has two ends if and only if  it is virtually $\mathbb{Z}$. As $K$ has a subgroup which is not virtually $\mathbb{Z}$, $e(K) \neq 2$. Thus $K$ is either one ended or $K$ has infinitely many ends. The latter is not a possibility, as we will now show. 

Suppose that $K$ has infinitely many ends. Then by Theorem \ref{stallings}, $K$ acts on a tree $T$ with no global fixed point and so that edge stabilisers are finite. We may restrict the action to $H$, but since $e(H)=1$ this action does have a fixed point, and since $H$ is infinite it cannot fix an edge so it must have a fixed vertex. So $H<A=\text{Stab}_G(v)$ for some vertex $v$. We will show that $A$ is almost malnormal in $G$. As $K$ is minimal amongst the almost malnormal subgroups containing $H$, this will imply that $K <A$ and hence, $A=K$ which contradicts the fact that $K$ acts with no global fixed point on $T$. 

Suppose first that $k\in K\setminus A$. Then $kv\not=v$ so $A\cap A^k$ stabilises each edge on the non-trivial geodesic from $v$ to $kv$. It follows that $A\cap A^k$ is finite. This tells us what happens for elements of $G$ that lie in $K$. If $g \in G \backslash K$, then $K \cap K^g$ is finite and hence $A \cap A^g$ which is contained in $K \cap K^g$ is finite. Thus $A$ is almost malnormal in $G$. 

We now need to check that there exists a proper $K$ almost invariant subset $A$ in $G$ such that $AK=A$. We generalise Kropholler's methods in \cite {torus} to deal with the almost malnormal subgroups. The strategy will be to show that for our choice of $K$, $H^1(K, \mathcal{F}_K(G))=0$. Recall that $K$ is a one ended almost malnormal subgroup of $G$ such that $\tilde{e}(G,K) \geq 2$. 

Let $\Lambda$ be a set of representatives for the double cosets of $K$ in $G$. As a $K$ module, the induced module $\mathcal{F}_K(G)$ is given by \[\textrm{Res}^G _K \ind ^G _K \mathcal{P}K \cong \oplus_{g \in \Lambda} \ind ^K _{K \cap K^g}\textrm{Res}^{K^g} _ {K \cap K^g}\mathcal{P}Kg. \] 
The module $\textrm{Res}^{K^g} _ {K \cap K^g}\mathcal{P}Kg$ may be identified with $\textrm{Res}^{K} _{K^{g^{-1}} \cap K}\mathcal{P}K$. Now, let $g$ represent a non-trivial double coset of $K$ in $G$. Then, we have  \[\textrm{Res}^{K} _{K \cap K^g}\mathcal{P}K \cong \textrm{Res}^{K} _{K\cap K^g} \coind^K _1 \mathbb{F}_2 \cong \prod_{(K\cap K^g)\backslash K} \coind^{K \cap K^g} _1 \mathbb{F}_2\]
The subgroup $K \cap K^g$ is finite and so the module $\coind^{K \cap K^g} _1 \mathbb{F}_2$ is isomorphic to the module $\ind^{K \cap K^g} _1 \mathbb{F}_2$, which is precisely the group algebra $\mathbb{F}_2[K \cap K^g]$.

Let $R$ denote the algebra $\mathbb{F}_2[K \cap K^g]$. Since $R$ is finite, for any index set $I$, \[ R^I := \prod_I R  \cong R \otimes \mathbb{F}_2 ^I.\] To see this, observe that $R^I$ is the algebra of all $R$ valued maps on $I$. For any $f: I \rightarrow R$ and $ r \in R$, define $F(r)$ to be the set $\{i \in I : f(i) = r\}$. Then the assignment \[ f \mapsto \sum_{r \in R}  r\otimes F(r)\] is the required isomorphism. We deduce from this discussion that $R^I$ is a free module over the $\field$-group algebra and it follows that $\mathcal{P}Kg$ is a free $K \cap K^g$-module. A module induced from a free module is also free and so we find that $\mathcal{F}_K(G)$ is the direct sum of $\mathcal{P}K$ and a free module. By Shapiro's Lemma, $H^1(K,\mathcal{P}K)=0$ for all groups $K$. Moreover, by Theorem \ref{stallings}, the first cohomology group of the one ended group $K$ with respect to any free module is trivial. Thus, $H^1(K,\mathcal{F}_K(G))$ is zero. 

If $B$ is a proper $K$ almost invariant subset of $G$ and $H^1(K,\mathcal{F}_K(G))$  is zero, then the derivation $B \mapsto B+Bg$ restricts to a principal derivation on $K$. There exists then a $K$-finite subset $C$ such that $B+Bx = C+Cx$ for all $x \in K$. Choose $A$ to be $B+C$. 

Observe that for all $g \in G \backslash K$, $\tilde{e}(G, K \cap K^g) =1$. This is because $G$ is one ended and each of the intersections $K \cap K^g$ is finite. The theorem now follows directly from Theorem 5.3 of \cite{torus}.  \end{proof}

\subsection{A conjecture of Kropholler and Roller}
In the proof of Theorem \ref{betti} we used the non-vanishing of the kernel of the restriction map $Res^G _H$ from $H^1(G, \mathcal{F}_K G)$ to $H^1(H, \mathcal{F}_K G)$ to extract a bi-invariant proper $K$ almost invariant subset of $G$ and this in turn, helped to produce the splitting for the group. Kropholler and Roller conjectured the following: 

\begin{conjecture}\label{conj} (Kropholler and Roller, \cite{KR}) Let $H
\leq G$ be  finitely generated groups. If $G$ contains a proper $H$ almost invariant subset $A$ such that $HAH=A$, then $G$ splits over a subgroup related to $H$. \end{conjecture}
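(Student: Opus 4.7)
Since Conjecture \ref{conj} is a long-standing open problem, my proof proposal can only aim at the special case $\beta_1^{(2)}(G) > \beta_1^{(2)}(H)$, which is the restricted version the authors announce in the introduction. The plan is to convert the bi-invariant $H$-almost invariant set $A$ into the hypotheses of Theorem \ref{almostmalnormal} and then invoke that theorem directly.

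First I would observe that, by definition, a proper $H$-almost invariant subset $A$ is a non-zero class in $\left(\mathcal{F}_H(G)\backslash \mathcal{P}G\right)^G$, so $\tilde{e}(G,H) \geq 2$. Next I would invoke the Peterson--Thom theorem quoted in the discussion following Theorem \ref{betti}: the inequality $\beta_1^{(2)}(G) > \beta_1^{(2)}(H)$ forces the almost malnormal closure $K$ of $H$ to be a proper subgroup of $G$. To apply Theorem \ref{almostmalnormal} one needs both $G$ and $H$ to be one-ended. Note that a group with positive first $\ell^2$-Betti number is either one-ended or has infinitely many ends, and in the latter case Stallings' Theorem \ref{stallings} already splits $G$ over a finite subgroup. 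If $H$ has infinitely many ends one similarly decomposes $H$ by Stallings and reduces to its one-ended constituents, in which case the hypothesis $\tilde{e}(G,H)\geq 2$ is inherited by at least one such factor. In the remaining case Theorem \ref{almostmalnormal} yields a splitting of $G$ over a subgroup of $K$; since $K$ is the almost malnormal closure of $H$ this edge group is ``related to $H$'' in the sense demanded by the conjecture.

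The main obstacle, and presumably the reason the conjecture in full generality remains open, is the case in which the almost malnormal closure of $H$ is all of $G$. In that situation Theorem \ref{almostmalnormal} is silent and the machinery developed in this paper offers no leverage at all. Any $\beta_1^{(2)}$-free attack would have to exploit the bi-invariance condition $HAH = A$ intrinsically, for example by constructing a $G$-tree whose vertices are equivalence classes of translates of $A$ modulo $H$-finite perturbation and analysing the edge stabilisers directly in relation to $H$. An alternative avenue would be a commensurator analysis in the spirit of \cite{N}, aiming to show that the stabiliser of $A$ in this hypothetical tree is commensurable with a subgroup of $H$. I would expect either approach to be technically demanding and, at present, to fall short precisely in the cases excluded by the $\ell^2$-Betti number hypothesis.
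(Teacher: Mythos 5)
Your proposal correctly reconstructs one-half of the paper's strategy (the use of Peterson--Thom to obtain a proper almost malnormal closure $K$, prompted by the $\ell^2$-Betti number hypothesis) but then diverges, and the divergence opens a genuine gap. You route everything through Theorem \ref{almostmalnormal}, which requires \emph{both} $G$ and $H$ to be one-ended. Your handling of the case when $H$ is not one-ended --- ``decompose $H$ by Stallings and reduce to its one-ended constituents'' --- does not work: a non-trivial Stallings decomposition of $H$ need not produce any one-ended pieces (for example, subgroups of non-abelian free groups are themselves free and have no one-ended subgroups at all), and even when a one-ended subgroup $H'<H$ exists, its almost malnormal closure need not coincide with $K$, need not be proper, and a splitting over a subgroup of it would not obviously be ``related to $H$.'' You also drop the two-ended case entirely.

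The paper's proof is a genuinely different and shorter argument precisely because it never invokes Theorem \ref{almostmalnormal} and therefore never needs $e(H)=1$. It exploits the hypothesis $HAH=A$ directly: Kropholler's Lemma 4.17 of \cite{torus} shows that the right-invariance $A=AH$ forces the singularity obstruction $\mathcal{S}_A(G,H)$ into the set $\mathcal{S}=\{g\in G:\tilde{e}(G,H\cap H^g)\geq 2\}$. Then one splits into cases according to whether $\mathcal{S}\subseteq K$: if so, $\langle \mathcal{S}\cup H\rangle$ is a proper subgroup of $G$ and the main theorem of \cite{N} yields the splitting; if not, pick $g\in\mathcal{S}\setminus K$, note that $H\cap H^g\subseteq K\cap K^g$ is finite by almost malnormality of $K$, so $\tilde{e}(G,H\cap H^g)\geq 2$ feeds into Stallings' theorem to split $G$ over a finite subgroup. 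In short, your route discards the bi-invariance data that is in fact essential in the non-one-ended case; the published argument keeps it, and that is what lets the proof go through without any restriction on $e(H)$.
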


Here we provide further evidence in favour of the conjecture. 

\begin{proposition} \label{conj2} Conjecture \ref{conj} is true for all pairs $G$ and $H$ satisfying the hypotheses of the conjecture along with the condition $\beta_1^{(2)}(G)$ $> \beta_1^{(2)}(H)$. \end{proposition}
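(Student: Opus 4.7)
The plan is to recognise Proposition \ref{conj2} as a near-immediate consequence of Theorem \ref{betti} combined with Stallings' Theorem \ref{stallings}. First, observe that the existence of a proper $H$-almost invariant subset $A$ with $HAH=A$ furnishes a non-trivial element of $(\mathcal{F}_H(G)\backslash\mathcal{P}G)^G$, and so $\tilde{e}(G,H)\geq 2$. Second, the strict inequality $\beta_1^{(2)}(G)>\beta_1^{(2)}(H)\geq 0$ forces $\beta_1^{(2)}(G)>0$; since $\beta_1^{(2)}$ vanishes on finite and on two-ended groups, $G$ has either one end or infinitely many ends.

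If $G$ has infinitely many ends, Stallings' Theorem \ref{stallings} already produces a splitting of $G$ over a finite subgroup, which trivially qualifies as a subgroup related to $H$. If $G$ is one-ended, I would invoke Theorem \ref{betti} directly: its two quantitative hypotheses $\tilde{e}(G,H)\geq 2$ and $\beta_1^{(2)}(G)>\beta_1^{(2)}(H)$ are in place, and its conclusion exhibits $G$ as a splitting over a subgroup of the almost malnormal closure of $H$. This is the natural interpretation in the present setting of the phrase ``a subgroup related to $H$'' from the statement of Conjecture \ref{conj}.

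The main obstacle is the tacit one-endedness assumption on $H$ built into Theorem \ref{betti}. In the case where $H$ is one-ended the argument above closes immediately. When $H$ has more than one end I would apply Stallings to $H$ itself to decompose it over a finite subgroup, and then combine this decomposition with the Peterson--Thom proper almost malnormal subgroup $K\supseteq H$ supplied by the $\ell^2$-Betti number hypothesis; the bi-invariance $HAH=A$ is the ingredient that lets one transfer a decomposition of $H$ to a splitting of $G$, using that any conjugate of $K$ distinct from $K$ meets $K$ only in a finite subgroup, so that the relevant cohomological obstruction (as in the proof of Theorem \ref{almostmalnormal}) still vanishes. This case analysis, rather than the main reduction, is where I expect the genuine technical work to lie.
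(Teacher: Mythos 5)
Your reduction covers two of the three cases cleanly: when $G$ has infinitely many ends Stallings applies, and when both $G$ and $H$ are one-ended you correctly note $\tilde{e}(G,H)\geq 2$ and hand off to Theorem~\ref{betti}. But the case $G$ one-ended and $H$ infinite with more than one end is a genuine gap, and your sketch for it does not close. Applying Stallings to $H$ decomposes $H$ over a finite subgroup, but there is no mechanism in your outline that promotes a splitting of the subgroup $H$ to a splitting of $G$; the bi-invariance $HAH=A$ gives you almost-invariant data for the pair $(G,H)$, not a compatibility between the Bass--Serre tree of $H$ and any tree for $G$. Moreover, the proof of Theorem~\ref{almostmalnormal} that you propose to imitate crucially uses that the almost malnormal closure $K$ is one-ended, which it infers from the presence of a one-ended $H\leq K$; once $H$ fails to be one-ended that step breaks, and there is no reason the cohomological argument ($H^1(K,\mathcal{F}_K(G))=0$) survives, since it relies on Theorem~\ref{stallings} applied to the one-ended group $K$.

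The paper avoids this difficulty by not routing through Theorem~\ref{betti} at all. After dispatching finite $H$ via Stallings, it fixes the $H$-almost invariant set $A$ with $HAH=A$ and considers the singularity obstruction $\mathcal{S}_A(G,H)$ from \cite{N}. Kropholler's Lemma (4.17 of \cite{torus}) — which only needs the right-invariance $A=AH$ — places $\mathcal{S}_A(G,H)$ inside $\mathcal{S}=\{g\in G:\tilde{e}(G,H\cap H^g)\geq 2\}$. Peterson--Thom still supplies the proper almost malnormal $K\supseteq H$. If $\mathcal{S}\subseteq K$, then $\langle\mathcal{S}\cup H\rangle$ is proper and the main theorem of \cite{N} yields a splitting; if some $g\in\mathcal{S}\setminus K$ exists, almost malnormality of $K$ forces $H\cap H^g$ finite, yet $\tilde{e}(G,H\cap H^g)\geq 2$, so Stallings splits $G$ over a subgroup commensurable with a finite group. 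This argument is uniform for all infinite $H$ and never needs $H$ (or $K$) to be one-ended, which is exactly the hypothesis your reduction cannot dispense with.
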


\begin{proof} The case when $H$ is finite follows from Stallings' celebrated Theorem on ends of groups. Assume that $H$ is infinite. Then, as before, $H$ is contained in a proper almost malnormal subgroup $K$ of $G$. 

Choose $A$ to be a proper $H$-almost invariant subset of $G$ such that $HAH=A$ and set $\mathcal{S}_A(G,H)$ to be the set of elements $g$ of the group such that all four intersections $A \cap gA$, $A \cap gA^*$, $A^* \cap gA$, and $A^* \cap gA^*$ are non-empty. This is the singularity obstruction defined in \cite{N} and discussed above.

By Kropholler's Lemma (4.17 of \cite{torus}), the condition that $A=AH$ ensures that $\mathcal{S}_A(G,H)$ is contained in the set $\mathcal{S}:=\{g \in G : \tilde{e}(G,H \cap H^g)\geq 2 \}$. Assume first that $\mathcal{S}$ is contained in $K$. Then, the singularity obstruction along with the subgroup $H$ generates a proper subgroup $\langle\mathcal{S} \cup H\rangle$ of $G$ and the main theorem of \cite{N} asserts that $G$ splits over a subgroup related to $\langle \mathcal{S}\cup H \rangle$ and hence to $H$. On the other hand, if  $\mathcal{S}$ is not contained in $K$, then for any $g \in \mathcal{S} \backslash K$, $\tilde{e}(G,H \cap H^g)\geq 2$ for the finite subgroup $H \cap H^g$. Once again, by Stallings theorem on ends of groups, $G$ splits over a subgroup commensurable with $H \cap H^g$. This verifies the conjecture for our choice of groups $G$ and $H$. \end{proof}

\section{Poincar\'e duality groups}\label{PropertyT}

\begin{mainthmtwo}
Let $G$ be a Poincar\'e duality group of dimension $n$. Suppose that $H$ is an $(n-1)$-dimensional Poincar\'e duality subgroup of $G$ and that $H$
has property (T). Then $G$ splits over a subgroup commensurable with $H$.
\end{mainthmtwo}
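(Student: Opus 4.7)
The plan is to reduce the theorem to Theorem \ref{almostmalnormal} by verifying its hypotheses, and then to refine the resulting splitting using Poincar\'e duality.

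First, I would verify that $\tilde{e}(G, H) \geq 2$. By Poincar\'e--Lefschetz duality applied to the PD$^n$ pair $(G, H)$, one obtains a non-zero class in $H^1(G, \mathbb{F}_2[G/H])$, whence $\tilde{e}(G, H) \geq 2$ by Proposition \ref{basics}(3). Both $G$ and $H$ are one-ended: $G$ because a PD$^n$ group with $n \geq 2$ cannot split over a finite subgroup, and $H$ because property (T) forbids any non-trivial action on a tree, so by Theorem \ref{stallings} it must be one-ended. The degenerate cases $n \leq 2$ do not arise: they would force $H$ to be trivial or infinite cyclic, neither of which has property (T) in the required sense.

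The heart of the proof is to show that the almost malnormal closure $K$ of $H$ in $G$ is a proper subgroup commensurable with $H$. The Niblo--Roller characterisation of property (T) (combined with Sageev's construction of a CAT(0) cube complex from an almost invariant set) asserts that a property (T) group admits no non-trivial action on a CAT(0) cube complex without a global fixed point; hence $\tilde{e}(H, H') = 1$ for every infinite-index subgroup $H' \leq H$. Now for any $g \in G$ with $H \cap H^g$ infinite, the Poincar\'e duality structure on $H^g$ gives $\tilde{e}(G, H^g) \geq 2$, from which I would construct, by restricting an $H^g$-almost invariant subset of $G$ to $H$, a non-trivial $(H \cap H^g)$-almost invariant subset of $H$; this forces $H \cap H^g$ to be of finite index in $H$, and so $g \in \textrm{Comm}_G(H)$. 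A Margulis-type rigidity argument for codimension-one property (T) subgroups of PD groups then yields that $\textrm{Comm}_G(H)$ is itself commensurable with $H$, and therefore so is $K$. In particular $K \neq G$.

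Finally, I would apply Theorem \ref{almostmalnormal} to obtain a splitting of $G$ over a subgroup $L \leq K$. By Bieri--Eckmann duality, any edge group in a splitting of a PD$^n$ group is itself PD$^{n-1}$, so $L$ is PD$^{n-1}$. Since $L$ is contained in $K$ and $K$ is commensurable with the PD$^{n-1}$ group $H$, the subgroup $L$ must be of finite index in $K$, and hence commensurable with $H$, as required.

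The main obstacle is the third paragraph: controlling the almost malnormal closure via the commensurator. My approach combines the Niblo--Roller interaction between property (T) and codimension-one subgroups with a classical rigidity principle for PD subgroups, but making the commensurator-rigidity step fully precise—in particular, producing the $(H \cap H^g)$-almost invariant subset of $H$ in a natural way from the PD data—is the most delicate point.
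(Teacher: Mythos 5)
Your approach diverges from the paper's, and the divergence introduces a genuine gap at the crucial step.

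The paper does not go through Theorem~\ref{almostmalnormal} at all. After computing $\tilde{e}(G,H)=2$ by Poincar\'e duality (as you do), it passes to a finite index subgroup $H'\leq H$ with $e(G,H')=2$ via Lemma 2.5 of \cite{KR}, applies Sageev's construction to get an essential $G$-action on a CAT(0) cube complex with $H'$ stabilising a hyperplane $J$, uses property (T) to get a global fixed point for the $H'$-action on $J$, and then invokes Lemma 2.5 of \cite{fixedpoint} to extract a bi-invariant $H'$-almost invariant set $B$ with $H'BH'=B$. Strebel's theorem and Lemma 5.1 of \cite{torus} then show that every element of the singularity obstruction $S_B(G,H')$ lies in $\textrm{Comm}_G(H')$, and Theorem~B of \cite{N} delivers the splitting. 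The key point is that one only needs the \emph{singularity obstruction} to sit inside the commensurator --- a very weak requirement --- not that the commensurator itself be small.

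Your proposal instead tries to show that $\textrm{Comm}_G(H)$ is commensurable with $H$ and, consequently, that the almost malnormal closure $K$ of $H$ is proper. This fails. Take $H$ normal in $G$, e.g.\ $G$ a $PD^n$ group fibering over $\mathbb{Z}$ with fibre $H$ a property (T) $PD^{n-1}$ group (a mapping torus of a quaternionic hyperbolic manifold gives exactly the kind of example the paper highlights). Then $\textrm{Comm}_G(H)=G$, no proper subgroup containing $H$ is almost malnormal, so $K=G$, and Theorem~\ref{almostmalnormal} is vacuous. Yet the theorem's conclusion is true there: $G$ is an HNN extension over $H$. So the ``Margulis-type rigidity'' step is not merely unproven; it is false, and the reduction to Theorem~\ref{almostmalnormal} cannot work in this case. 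Your final paragraph also overreaches: it is not known in general that an arbitrary edge group in a splitting of a $PD^n$ group is $PD^{n-1}$; one only has bounds on its cohomological dimension, which is exactly what the paper uses (through \cite{R} and Lemma 5.1 of \cite{torus}) to place the singularity obstruction in the commensurator. The missing idea is Theorem~B of \cite{N} (splitting when $\textrm{Sing}(G,H)\subset\textrm{Comm}_G(H)$), together with Sageev's Lemma 2.5 of \cite{fixedpoint} to produce the bi-invariant almost invariant set from the fixed point on the hyperplane --- these replace both the commensurator rigidity and the appeal to Theorem~\ref{almostmalnormal}.
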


An $n$-dimensional Poincar\'e duality group is also called a $PD^n$ group. 

\begin{proof} Let $G$ and $H$ be as in the statement of the theorem. Then
a simple computation shows that the end invariant $\tilde{e}(G,H)$ is precisely 2. We include the computation here for sake of completeness. Recall that
$\tilde{e}(G,H)$ $=1 + \dim H^1(G, \mathcal{F}_H(G)$. Denote the dualizing module $H^n(G, \field G)$ by $D_G$. In our case, $D_G \cong \field$.
Since $G$ is a $PD^n$ group, we have $H^1(G, \mathcal{F}_H(G))$ $\cong$ $H_{n-1}(G, \ind^G_H (\mathcal{P}H \otimes _{\field} D_G))$. By Shapiro's Lemma, $H_{n-1}(G, \ind^G_H (\mathcal{P}H)\otimes _{\field} D_G))$ $\cong$ $H_{n-1}(H, \mathcal{P}H \otimes_{\field} D_G)$. Since $H$ is a $PD^{n-1}$ group, $H_{n-1}(H, \mathcal{P}H \otimes_{\field} D_G )$ is isomorphic to $\textrm{Hom}_{\field H}(D_H,\mathcal{P}H \otimes_{\field} D_G)$ $\cong \field$. Hence, $\tilde{e}(G,H)= 2$. We now invoke Lemma 2.5 of \cite{KR} to get a subgroup $H'$ of finite index in $H$ such that $e(G,H')$= $\tilde{e}(G,H)$=2. 

Applying Sageev's construction (see \cite{S}) we obtain a CAT(0) cube complex $X$ such that $G$ acts essentially on $X$ and $H'$ is the stabilizer of an oriented codimension 1 hyperplane $J$. As $H'$ has finite index in the property (T) group $H$, $H'$ also has property (T). However, every action of a group with property (T) on a CAT(0) cube complex  must have a fixed point (see \cite{propt}) and so the action of $H'$ on the CAT(0) cube complex $J$ has a global fixed point. Hence, Lemma 2.5 from \cite{fixedpoint} implies the existence of a proper $H'$ almost invariant subset $B$ of $G$ such that $H'BH'=B$. 

Recall that the singularity obstruction $S_B(G,H')$ satisfies the following: for all $g \in S_B(G,H')$, the subgroup $K_g$ defined as $H'\cap gH'g^{-1}$ has a proper almost invariant set $B_g$ such that $K_gB_g=B_g$. But this implies that $e(G,K_g)$ is at least 2. 

Every subgroup of infinite index in an $n$-dimensional Poincar\'e Duality group has cohomological dimension strictly less than $n$ (See \cite{R}). Moreover, for any $PD^n$ group $X$ with subgroup $Y$ of type FP, $\textrm{cd}_{\field} Y$ $\leq n-2$ precisely when $\tilde{e}(X,Y)=1$ (Lemma 5.1 of \cite{torus}). This implies that $K_g$ has finite index in both $H'$ and $gH'g^{-1}$. More precisely, $g$ lies in the commensurator $\textrm{Comm}_G(H')$ of $H'$ and $S_B(G,H)$ is a subset of $\textrm{Comm}_G(H')$. Therefore by Theorem B of \cite{N}, $G$ splits over a subgroup commensurable with $H'$. This proves the theorem. \end{proof}


\begin{thebibliography}{999}
\bibitem{dicks} W. Dicks and M. Dunwoody, Groups acting on Graphs, Cambridge Studies in Advanced Mathematics 17, CUP (1989).
\bibitem{luck2} B. Eckmann, Introduction to $l_2$-methods in topology : reduced $l_2$-homology, harmonic chains, $l_2$-Betti numbers. Notes prepared by Guido Mislin.  Israel J. Math.  117  (2000), 183--219.
\bibitem{ends} H. Freudenthal, Uber die Enden topologischer Räume und Gruppen, Math. Z. (33), 1 (1931) 692-713.
\bibitem{hopf} H. Hopf, Enden offener Raume und unendliche diskontinuierliche Gruppen, Comment. Math. Helvetici, 16 (1944), 81-100.
\bibitem{karniblotop} A. Kar and G. A. Niblo, Topological Superrigidity, Preprint 2011. 
\bibitem{torus} P. Kropholler, A Group Theoretic Proof of the Torus Theorem, Geometric Group Theory Volume 1, Eds. G.A. Niblo and M. Roller, LMS Lecture Notes Series 181. 
\bibitem{KR} P. Kropholler and M. A. Roller, Relative Ends and Duality Groups, Journal of Pure and Applied Algebra, 61 (1989) 197-210, North Holland.
\bibitem{N} G. A. Niblo, The singularity obstruction for group splittings, Topology Appl. 119 (2002), no.1, 17-31.
\bibitem{catcoxeter} G. A. Niblo and L. D. Reeves, Coxeter groups act on CAT(0) cube complexes, J. Group Theory 6 (2003), 399-413.
\bibitem{propt} G. A. Niblo and M. A. Roller, Groups acting on cubes and Kazhdan's Property (T), Proc. Amer. Math. Soc., 126 (1998) 693-699.
\bibitem{Nibloshort} G. A. Niblo and M. Sageev, The Kropholler Conjecture, L'Enseignement Mathématique (2) 54 (2008), 147-149.
\bibitem{nuida} K. Nuida, On reflections in Coxeter groups commuting with a reflection,	arXiv:math/0603667v1.
\bibitem{malnormal} J. Peterson and A. Thom, Group Cocycles and the ring of Affiliated Operators, arXiv:0708.4327v1.
\bibitem{S} M. Sageev, Ends of group pairs and non-positively curved cube complexes, Proc. London Math. Soc. 71 (1995), 585-617.
\bibitem{fixedpoint} M. Sageev, Co-dimension 1 subgroups and splittings of groups, Journal of Algebra, 189 (1997), 377-389. 
\bibitem{scottends} G. P. Scott, Ends of pairs of groups, J. Pure Appl. Algebra, 11 (1977), 179-198. 
\bibitem{scottsurface} G. P. Scott, Subgroups of surface groups are almost geometric, J. London Math. Soc (2) 17 (1978), 555-565.
\bibitem{Scott2} G. P. Scott, A new proof of the Annulus and Torus Theorems, Amer. J. Math., 102 (1980) 241-277.
\bibitem{infends} J. Stallings, On torsion-free groups with infinitely many ends, Ann. of Math. (2) 88 (1968) 312-334.
\bibitem{R} R. Strebel, A remark on subgroups of infinite index in Poincar\'e duality groups, Comment. Math. Helv. 52(1977) 317-324.
\end{thebibliography}
\end{document}